\documentclass[12pt]{amsart}
\usepackage{fullpage}
\usepackage{amsthm}
\usepackage{amsmath}
\usepackage{amssymb}
\usepackage{booktabs}
\usepackage[ruled,vlined,linesnumbered]{algorithm2e}
\usepackage{comment}
\usepackage{float}
\usepackage{array}
\usepackage{placeins}
\usepackage{enumitem}
\usepackage{hyperref}

\makeatletter
\@namedef{subjclassname@2020}{%
  \textup{2020} Mathematics Subject Classification}
\makeatother

\newtheorem{thm}{Theorem}[section]

\newtheorem{lemma}[thm]{Lemma}
\newtheorem{cor}[thm]{Corollary}
\newtheorem{remark}{Remark}[section]
\newtheorem{question}{Question}[section]

\numberwithin{equation}{section}

\newcommand{\F}{\mathbb{F}_q}
\newcommand{\Fp}{\mathbb{F}_p}

\title[Non-square non-primitive pairs] {Consecutive non-square non-primitive \\pairs in a Finite Field}

\author{Stephen D. Cohen}

\date{} 
\subjclass[2020]{Primary 11T30; Secondary 11T24}
\keywords{Finite fields, primitive elements, non-square non-primitive elements, cyclotomic numbers, Jacobi sums}
\begin{document}
\maketitle

\begin{abstract}
Let $q$ be an odd prime power and write
\[
\theta_q := \frac{\phi(q-1)}{q-1}.
\]
If $\theta_q < \tfrac{1}{3}$, or if $\theta_q = \tfrac{1}{3}$ and 
$q \notin \{7,13,19,25,37\}$, then the finite field $\F$ contains a pair of consecutive elements that are both non-square and non-primitive. This extends a result of Jarso and Trudgian for prime fields $\Fp$, where the same conclusion was obtained under the stronger condition $\theta_p \le \tfrac{1}{4}$.

More generally, let $\ell$ be the least odd prime divisor of $q-1$. If $\theta_q \le \tfrac{1}{3}$, then $\F$ contains a pair of consecutive elements that are non-squares and $\ell$th powers, with the sole exceptions 
$q \in \{7,13,19,25,37,43\}$.
\end{abstract}

\section{Introduction}

Let $\F$ denote the finite field of order $q$. In a series of papers published in 1985, the author \cite{Coh1985a, Coh1985b, Coh1985} proved that, provided $q\neq 2,3,7$, the field $\F$ contains a consecutive pair of primitive elements. Many related existence results have since been obtained.

A natural complementary question is whether there always exists a consecutive pair of \emph{non-primitive} elements. When $q$ is odd, this question requires some qualification: exactly half of the non-zero elements of $\F$ are squares, and hence already non-primitive. It is easy to see that, for $q>3$, there exist consecutive non-squares. The deeper question is whether there exists a consecutive pair of elements that are both non-square and non-primitive.

From now on, assume $q$ is an odd prime power and define
\[
\theta_q := \frac{\phi(q-1)}{q-1}.
\] 
We call an element of $\F$ NSNP if it is both a non-square and non-primitive. An  NSNP pair is a pair of {\em consecutive} NSNP elements. 

If $\theta_q=1/2$, then every non-square is primitive; this occurs precisely when $q$ is a Fermat prime or $q=9$.  In these cases, no NSNP elements (and therefore no NSNP pairs) exist. Thus some restriction on $\theta_q$ is necessary.

For prime fields $\Fp$, the problem has been studied in terms of congruences modulo $p$. By elementary methods, Gun, Ramakrishnan, Sahu, and Thangadurai \cite{GRST2005} proved that if $\theta_p < 1/6$, then $\Fp$ contains an  NSNP pair. Using more elaborate sieving methods, Jarso and Trudgian \cite{JT2019} improved this to the weaker condition $\theta_p \le 1/4$. Previously, Gun, Luca, Rath, Sahu, and Thangadurai \cite{GLRST2007} had shown via character sum estimates, as a particular case of a more general result, that for sufficiently large primes $p$, an  NSNP pair exists whenever $\theta_p < 1/2 - \varepsilon$. When $\varepsilon = 1/6$ (so that $\theta_p < 1/3$), the argument in \cite{GLRST2007} appears to require a bound as large as $p > 10^{430}$ to guarantee the existence of an  NSNP pair.

\medskip

We call an element of $\F$ \emph{NSNP} if it is both a non-square and non-primitive. An \emph{NSNP pair} is a pair of consecutive NSNP elements.

If $\theta_q=1/2$, then every non-square is primitive; this occurs precisely when $q$ is a Fermat prime or $q=9$. In these cases, no NSNP elements (and therefore no NSNP pairs) exist. Thus some restriction on $\theta_q$ is necessary.

\bigskip
Our main result is the following.

\begin{thm}\label{main}
Let $q$ be an odd prime power and define
\[
\theta_q := \frac{\phi(q-1)}{q-1}.
\]
If $\theta_q < \tfrac{1}{3}$, then $\F$ contains a pair of consecutive elements that are both non-square and non-primitive.

If $\theta_q = \tfrac{1}{3}$, then the same conclusion holds except when
\[
q \in \{7,13,19,25,37\}.
\]
\end{thm}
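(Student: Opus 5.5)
Write $q-1=2^a m$ with $m$ odd, and let $\ell$ be the least odd prime divisor of $q-1$. Such an $\ell$ exists because $\theta_q\le\frac13<\frac12$, and then $\theta_q\le \frac12\cdot\frac{\ell-1}{\ell}$. The case $\theta_q=\frac13$ forces $\ell=3$ with $q-1=2^a3^b$. The plan is to look for a consecutive pair $(\alpha,\alpha+1)$ in which both entries are non-squares and both are $\ell$th powers. An $\ell$th power is automatically non-primitive, so this gives an NSNP pair.

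\textbf{Reduction to cyclotomic numbers.} Let $d=\mathrm{lcm}(2,\ell)=2\ell$. An element is a non-square $\ell$th power exactly when it lies in the coset $g^{\ell}\langle g^{2\ell}\rangle$ of the index-$2\ell$ subgroup $\langle g^{2\ell}\rangle$, where $g$ is a generator of $\F^*$. So we need the cyclotomic number $(\ell,\ell)_{2\ell}$ to be positive. We count it with multiplicative characters. Let $\eta$ be the quadratic character and $\chi$ a character of order $\ell$. Then
\[
N=\frac{1}{4\ell^2}\sum_{\alpha\ne0,-1}\bigl(1-\eta(\alpha)\bigr)\bigl(1-\eta(\alpha+1)\bigr)\sum_{i,j=0}^{\ell-1}\chi^i(\alpha)\chi^j(\alpha+1).
\]
Expanding gives a main term $(q-2)/(4\ell^2)$ plus Jacobi sums $J(\psi_1,\psi_2)$, where $\psi_1,\psi_2$ range over characters of order dividing $2\ell$. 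Each Jacobi sum with $\psi_1,\psi_2,\psi_1\psi_2$ all nontrivial has absolute value $\sqrt q$. Those with one character trivial contribute $-1$, or $q-2$ only in the main term. The result is a bound of the form $4\ell^2N\ge q-2-(4\ell^2-3\ell)\sqrt q-O(\ell)$, made explicit by counting the terms. Hence $N>0$ once $\sqrt q>4\ell^2$ or so. Rather than the crude count, I would compute the exact number of pure-$\sqrt q$ terms, since the constant matters for the finite check.

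\textbf{Handling large $\ell$.} When $\ell$ is large the bound above is useless. In that case I would switch to a sieve on the NSNP property directly: it suffices that both elements are non-squares and both fail to be primitive, and the second condition can be met through any odd prime $r\mid q-1$. Non-primitive non-squares have density $\frac12-\theta_q\ge\frac16$ among nonzero elements. So I would write the NSNP indicator as $\frac12(1-\eta)-\theta_q\cdot(\text{primitive indicator of the non-squares})$ and expand the primitive indicator by Vinogradov's formula. This gives a count $\ge q(\frac12-\theta_q)^2-c\,W(m)^2\sqrt q$, where $W$ counts squarefree divisors. With $\frac12-\theta_q\ge\frac16$ this is positive when $\sqrt q>36W(m)^2$ roughly. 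If needed, I would reduce the divisor count with a Cohen–Huczynska prime sieve over the odd primes of $m$. When $\theta_q=\frac13$ this gives $\ell=3$, and the cyclotomic approach is clean: $(3,3)_6$ has known closed formulas in terms of $q=A^2+3B^2$, in Gauss–Dickson style. Those formulas give exact values of $N$ and should identify precisely $7,13,19,25,37$ as the failures, plus $43$ for the $\ell$th power version.

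\textbf{The main obstacle.} The hard part is the finite residue. The sieve bound leaves finitely many $q$ with $\theta_q<\frac13$, each with small $W(m)$ relative to $q$. These must be checked, ideally by computing NSNP pairs directly by machine for all odd prime powers up to the resulting bound, which will likely be around $10^{8}$ or more. I expect most of the effort to go into making the sieve tight enough that this bound is computationally feasible. The key lever is to treat $q-1=2^a3^b$ exactly (via the cyclotomic formula) and to use the extra slack $\frac12-\theta_q>\frac16$ when $\ell\ge5$, where $\frac12-\theta_q\ge\frac12-\frac25=\frac1{10}$ but only a few primes are involved.
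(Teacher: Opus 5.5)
Your reduction to NS$\ell$ pairs (with $\ell$ the least odd prime divisor of $q-1$), the Jacobi-sum bound giving $N_\ell(q)>0$ once $\sqrt q$ exceeds roughly $4\ell^2$, and the explicit $(3,3)_6$ formulas for $\ell=3$ all match the paper. The gap is in ``Handling large $\ell$''. Its premise, that this bound is useless for large $\ell$, is false under the hypothesis, and the idea you are missing is the one that finishes the paper's proof. Every prime $p\mid m$ satisfies $p\ge\ell$, so $\theta_q=\frac12\prod_{p\mid m}(1-\frac1p)\le\frac13$ forces $m$ to have many prime factors, and hence forces $q$ to be large compared with $\ell$.
\begin{itemize}
\item For $\ell\ge11$: $(1-\frac1\ell)^4\ge(\frac{10}{11})^4>\frac23$, so $m$ has at least five distinct prime factors, each at least $\ell$. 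Hence $q>2\ell^5>16\ell^4$.
\item For $\ell=7$ (so $3,5\nmid q-1$): $\frac67\cdot\frac{10}{11}\cdot\frac{12}{13}\cdot\frac{16}{17}>\frac23$, so $q-1\ge2\cdot7\cdot11\cdot13\cdot17\cdot19$. Hence $q\ge646647>16\cdot7^4$.
\end{itemize}
So Corollary~\ref{Npos} alone settles every $\ell\ge7$; these are Theorems~\ref{l7} and~\ref{largeL}. Only $\ell=3$ and $\ell=5$ leave finite checks, and for $\ell=5$ these are just $q\in\{911,1331,2381,3221,3851\}$. Without this observation, the residue $q\le16\ell^4$ taken over all $\ell$ is not finite.

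Your substitute would not close the argument.
\begin{itemize}
\item In the Vinogradov expansion of the NSNP indicator, the error is of order $\theta_q^2\,4^{\omega(q-1)}\sqrt q$. Large $\ell$ forces $\omega(q-1)$ to be large, so this estimate is weakest exactly where you intend to use it.
\item The resulting residue is astronomically beyond your guess of $10^8$. With fifteen odd primes dividing $m$, your criterion $\sqrt q>36W(m)^2$ fails for all $q$ up to about $10^{21}$. The paper notes that a comparable character-sum argument seems to need $p>10^{430}$.
\item A Cohen--Huczynska prime sieve does not rescue this. It gives lower bounds for primitivity, which is an intersection of ``non-$r$th power'' conditions. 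A lower bound for NSNP pairs instead needs an \emph{upper} bound on primitive elements, and, as the paper remarks, no suitable sieve for that is available.
\end{itemize}
The point of fixing $\ell$ is that only characters of order dividing $2\ell$ occur, so the error term does not depend on $\omega(q-1)$.

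Two smaller points.
\begin{itemize}
\item $\theta_{43}=\frac27<\frac13$ but $N_3(43)=0$. You therefore must exhibit an NSNP pair of mixed type, such as $(7,8)$: $7$ is a non-square seventh power and $8$ is a non-square cube.
\item To certify $7,13,19,25,37$ as genuine exceptions, you need to state that NSNP coincides with NS3 when $q-1=2^a3^b$.
\end{itemize}
Your instinct to count the $\sqrt q$-terms exactly is right. There are $4\ell^2-(6\ell-2)=2(\ell-1)(2\ell-1)$ of them, which is more than the $4(\ell-1)^2$ in Theorem~\ref{thm:main}. For $\ell=3$ this moves the threshold from $269$ to about $420$, a range the exact $(3,3)_6$ formulas cover anyway.
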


\medskip
To establish Theorem~\ref{main}, we study a more general problem. Let $\ell$ be an odd prime dividing $q-1$. We say that an element of $\F$ is an \emph{NS$\ell$ element} if it is both a non-square and an $\ell$th power, and define an \emph{NS$\ell$ pair} analogously. Clearly, every NS$\ell$ pair is an NSNP pair.

We prove the following general result, in which $\ell$ is taken to be the least odd prime divisor of $q-1$.

\begin{thm}\label{lpower}
Let $q$ be an odd prime power such that $\theta_q \le \tfrac{1}{3}$, and let $\ell$ be the least odd prime divisor of $q-1$. Then, except when $\ell = 3$ and
\[
q \in \{7,13,19,25,37,43\},
\]
the field $\F$ contains an NS$\ell$ pair.
\end{thm}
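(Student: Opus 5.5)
We count NS$\ell$ pairs with characters and cyclotomic numbers. Put $d = 2\ell$, which divides $q-1$. Let $\eta$ be the quadratic character and $\chi$ a character of order $\ell$ on $\F^*$. An element $x\neq 0$ is NS$\ell$ exactly when $\eta(x)=-1$ and $x$ is an $\ell$th power. Since $\gcd(2,\ell)=1$, this says that $x$ lies in one fixed coset $C$ of the subgroup of $d$th powers. Write $g$ for a generator of $\F^*$; then $C = g^{\ell}\langle g^{d}\rangle$, and $C$ has $(q-1)/d$ elements. The number of NS$\ell$ pairs is $N=\#\{x : x\in C,\ x+1\in C\}$, which is a cyclotomic number of order $d$. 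Expanding the indicator of $C$,
\[
1_C(x)=\frac{1-\eta(x)}{2}\cdot\frac{1}{\ell}\sum_{j=0}^{\ell-1}\chi^j(x),
\]
and multiplying out the product for $x$ and $x+1$ gives
\[
N = \frac{q-1}{d^2} + \frac{1}{d^2}\sum_{(\psi_1,\psi_2)\ne(1,1)} c_{\psi_1}c_{\psi_2}\sum_x \psi_1(x)\psi_2(x+1) + O(1/d^2).
\]
Here $\psi_1,\psi_2$ range over characters of order dividing $d$, and the coefficients $c_\psi$ have modulus $1$. The inner sums are Jacobi sums. Each has modulus at most $\sqrt q$ when $\psi_1,\psi_2,\psi_1\psi_2$ are all non-trivial, and modulus $1$ otherwise. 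So we get $N \ge \bigl(q-2-(d-1)(d-2)\sqrt q - (2d-2)\bigr)/d^2$ up to small constants. This is positive once $\sqrt q > d^2 \approx 4\ell^2$.

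Next we use the hypothesis $\theta_q\le 1/3$ to bound $\ell$ in terms of $q$. Since $\ell$ is the least odd prime divisor of $q-1$, every odd prime factor $p$ of $q-1$ satisfies $p\ge \ell$. Then
\[
\theta_q = \tfrac12\prod_{p\mid q-1,\ p \text{ odd}}(1-1/p)\le 1/3
\]
forces $\prod(1-1/p)\le 2/3$.

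If $\ell=3$, this holds automatically. The Jacobi-sum bound with $d=6$ then gives $N>0$ for $q$ above an explicit threshold of order $10^3$. The remaining $q\equiv 1 \pmod 6$ below the threshold are checked directly, or better, via the exact evaluation of cyclotomic numbers of order 6 through the representation $4q = L^2+27M^2$ (Gauss/Dickson). This check produces exactly the exceptional list $\{7,13,19,25,37,43\}$.

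If $\ell\ge5$, then $\prod(1-1/p)\le 2/3$ requires many odd prime factors, each at least $\ell$. With $k$ such primes we have roughly $(1-1/\ell)^k\le 2/3$, so $k\gtrsim 0.4\ell$. Hence $q-1\ge 2\ell^{k}$, which is astronomically larger than $16\ell^4$. This settles every $\ell\ge5$ uniformly, possibly after a short explicit check for $\ell=5,7$.

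The main obstacle is the $\ell=3$ regime. The crude Weil bound leaves a finite but nontrivial range of $q$. To close it cleanly I would first sharpen the count: use that several of the Jacobi sums are conjugates or coincide, and exploit the exact order-6 cyclotomic formulas in terms of $L,M$, so that $N$ is written as an explicit quadratic expression. Then $N=0$ can be solved directly and yields precisely the six exceptions. Prime powers $q=p^{2k}$ (e.g.\ $25$) need the same formulas, with the appropriate normalization of $L$.
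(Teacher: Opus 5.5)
Your route is the same as the paper's. You use a Jacobi-sum lower bound for $N_\ell(q)=(\ell,\ell)_{2\ell}$ that is positive once $q>16\ell^4$. You use the hypothesis $\theta_q\le 1/3$ to force many odd prime factors of $q-1$ when $\ell\ge5$. For $\ell=3$ you use exact order-6 cyclotomic numbers on the finitely many small $q$.

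The gap is in the $\ell\ge5$ step. Treating every odd prime factor as if it were $\ell$, the inequality $(1-1/\ell)^k\le 2/3$ gives only $k\ge2$ for $\ell=5$ and $k\ge3$ for $\ell=7$. So $q-1\ge2\ell^k$ yields only $q>50$ and $q>686$, nowhere near $16\ell^4=10^4$ and $38416$. Your claim that this settles $\ell\ge5$ ``uniformly'' fails exactly where it is needed.

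For $\ell\ge11$ your estimate does work: $(10/11)^4>2/3$ forces $k\ge5$, and $2\ell^5>16\ell^4$. This is simpler than the paper's lemmas on $S_r,\Phi_r,P_r$.

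For $\ell=7$ the repair is to use that the prime factors are distinct and avoid $3,5$. Since $\tfrac12\cdot\tfrac67\cdot\tfrac{10}{11}\cdot\tfrac{12}{13}\cdot\tfrac{16}{17}>\tfrac13$, there are at least five odd prime factors. Hence $q\ge2\cdot7\cdot11\cdot13\cdot17\cdot19+1=646647$, as in the paper.

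For $\ell=5$, no counting of prime factors closes the gap. The cheapest admissible configuration is $q-1=2\cdot5\cdot7\cdot11$, which only gives $q\ge771$. The Jacobi-sum bound, even with your count $(d-1)(d-2)=72$, only becomes positive around $q\approx5200$.

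What is missing is identifying and checking the residual fields. Since $3\nmid q-1$, and $q=81$ has $\theta_{81}=2/5$, one needs $q\equiv11\pmod{30}$. In this range $\theta_q\le1/3$ then forces $7\mid q-1$. This leaves $q\in\{911,1331,2381,3221,3851\}$.

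The theorem's assertion that $\ell=5$ gives no exceptions rests on $N_5(q)>0$ for these five fields. The paper checks this by direct computation, obtaining $N_5(q)=15,12,25,35,38$. Your proposal neither isolates these fields nor supplies a substitute, such as explicit order-10 cyclotomic numbers. It also places the only small-$q$ difficulty at $\ell=3$. The six genuine exceptions there show this cannot be taken for granted at $\ell=5$.

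A side remark: your count $(d-1)(d-2)=(\ell-1)(4\ell-2)$ of Jacobi sums of modulus $\sqrt q$ is the correct one. The paper's block estimates add up to exactly this, although it records $4(\ell-1)^2$. So for $\ell=3$ the Weil cutoff is roughly $q\approx420$ rather than $269$. Your plan of running the exact order-6 formulas up to a threshold of order $10^3$ covers this safely. For $\ell=5$, the corrected cutoff adds no fields to the list above.
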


\begin{remark}
\textnormal{The exceptional values of $q$ in Theorem~\ref{main} correspond precisely to those for which $\theta_q = \tfrac{1}{3}$ and no NSNP pair exists.}
\end{remark}

\begin{remark}
\textnormal{From Theorem $\ref{main}$, $\theta_q = 1/3$ represents a critical boundary value. All the exceptional prime powers $q$ have this value of $\theta_q$.}
\end{remark}

To interpret the hypotheses, observe that $\theta_q$ is the proportion of primitive elements in $\F^\times$. 
Since exactly half of $\F^\times$ consists of non-squares, the proportion of NSNP elements is
\[
\frac{1}{2} - \theta_q.
\]

Thus:
\begin{itemize}
\item $\theta_q < 1/6$ (as in \cite{GRST2005}) implies NSNP elements are at least twice as numerous as primitive elements;
\item $\theta_q \le 1/4$ (as in \cite{JT2019}) implies NSNP elements are at least as numerous as primitive elements;
\item $\theta_q \le 1/3$ (this paper) implies NSNP elements are at least half as numerous as primitive elements.
\end{itemize}

The existence problem for NSNP pairs is therefore comparable in difficulty to the existence of consecutive primitive pairs. However, primitive elements have a simpler multiplicative structure, and  (lower-bound) sieve methods apply more naturally to them. For NSNPs, appropriate  upper-bound sieves seem not to be available.

It is therefore natural to fix a prime $\ell \mid (q-1)$ and study the existence of consecutive pairs of elements that are both non-squares and $\ell$th powers. 
We call such a pair an \emph{NS$\ell$ pair}. 
We first treat general $\ell$, and then specialise to the key small primes $\ell = 3$ and $\ell = 5$.

\begin{remark}
\textnormal{For later reference, we record which relevant values of $\theta_q$ occur for odd prime powers $q$:
\begin{itemize}
\item $\theta_q = 1/2$ if and only if $q$ is a Fermat prime or $q=9$;
\item $\theta_q = 1/3$ if and only if $q-1 = 2^a 3^b$ with $a,b \ge 1$;
\item $\theta_q \neq 1/4$ and $\theta_q \neq 1/6$, since neither $4$ nor $6$ is the Euler totient of any integer of the form $q-1$ with $q$ an odd prime power.
\end{itemize}
}
\end{remark}

 \bigskip
\section{A lower bound for the number of NS$\ell$ pairs}\label{lb}

Let $q$ be an odd prime power, and let $\ell$ be an odd prime dividing $q-1$. Since $\F^\times$ is cyclic of order $q-1$, there exists a multiplicative character $\eta$ of exact order $\ell$. Let $\lambda$ denote the quadratic character of $\F^\times$. Extend both $\lambda$ and $\eta$ to $\F$ by setting $\lambda(0)=0$ and $\eta(0)=0$. With this convention, all character sums over $\F$ are well-defined (see \cite{BEW1998}).

We seek a lower bound for
\[
N_{\ell}(q) = \#\{\alpha \in \F: \alpha, \alpha+1 \text{ are both non-squares and $\ell$th powers}\}.
\]

Define the characteristic function of elements of $\F$ that are both non-squares and $\ell$th powers as follows:
\[
f_\ell(\alpha) = \frac{1}{2\ell}\,(1-\lambda(\alpha)) \sum_{i=0}^{\ell-1} \eta^i(\alpha).
\]

It follows that
\[
N_\ell(q) = \sum_{\alpha \in \F\setminus \{0,-1\}} f_\ell(\alpha) f_\ell(\alpha+1).
\]
The exclusion of $\{0,-1\}$ is harmless, since $f_\ell(0)=0$ and $f_\ell(\alpha+1)=0$ when $\alpha=-1$.

Expanding $f_\ell(\alpha)f_\ell(\alpha+1)$ gives
\[
N_\ell(q) = \frac{1}{4\ell^2} \sum_{i,j=0}^{\ell-1} S(i,j),
\]
where
\begin{equation}\label{Neq2}
S(i,j) = J(\eta^i, \eta^j) - \lambda(-1) J(\lambda \eta^i, \eta^j) - J(\eta^i, \lambda \eta^j) + \lambda(-1) J(\lambda \eta^i, \lambda \eta^j).
\end{equation}
Here the Jacobi sum is defined by
\[
J(\chi_1,\chi_2) = \sum_{\alpha \in \F} \chi_1(\alpha)\chi_2(1-\alpha).
\]
This identity follows from the change of variables $\beta=-\alpha$, which transforms sums of the form $\sum \chi_1(\alpha)\chi_2(\alpha+1)$ into Jacobi sums, introducing factors of $\chi_1(-1)$ that account for the coefficients above.

We now evaluate or bound each contribution $S(i,j)$ using standard facts about Jacobi sums: if $\chi_1,\chi_2$ are nontrivial and $\chi_1\chi_2=1$, then $J(\chi_1,\chi_2)=-1$, while otherwise $|J(\chi_1,\chi_2)|=\sqrt{q}$.

\medskip

\noindent
\textbf{The block $S(0,0)$.}
Since $\eta^0=1$, we have
\[
S(0,0) = \sum_{\alpha \ne 0,-1} (1-\lambda(\alpha))(1-\lambda(\alpha+1)).
\]
Expanding,
\[
S(0,0)
= \sum_{\alpha \ne 0,-1} 1
- \sum_{\alpha \ne 0,-1} \lambda(\alpha)
- \sum_{\alpha \ne 0,-1} \lambda(\alpha+1)
+ \sum_{\alpha \ne 0,-1} \lambda(\alpha)\lambda(\alpha+1).
\]
These sums evaluate as follows:
\[
\sum_{\alpha \ne 0,-1} 1 = q-2,
\]
\[
\sum_{\alpha \ne 0,-1} \lambda(\alpha) = -\lambda(-1),
\quad
\sum_{\alpha \ne 0,-1} \lambda(\alpha+1) = -1,
\]
and
\[
\sum_{\alpha \ne 0,-1} \lambda(\alpha)\lambda(\alpha+1) = -1.
\]
Hence
\[
S(0,0) = q - 2 + \lambda(-1).
\]

\medskip

\noindent
\textbf{The blocks $S(0,j)$, $1 \le j \le \ell-1$.}
Using $J(1,\eta^j)=-1$ and $J(1,\lambda\eta^j)=-1$, while the remaining Jacobi sums have absolute value $\sqrt{q}$, we obtain
\[
|S(0,j)| \le 2\sqrt{q}.
\]
Summing over $j$ gives a total contribution bounded by
\[
2(\ell-1)\sqrt{q}.
\]

\medskip

\noindent
\textbf{The blocks $S(i,0)$, $1 \le i \le \ell-1$.}
We have
\[
J(\eta^i,1) = -1, \quad J(\lambda\eta^i,1) = -1,
\]
so that
\[
J(\eta^i,1) - \lambda(-1)J(\lambda\eta^i,1)
= \lambda(-1) - 1.
\]
Summing over $i$ yields a constant contribution
\[
(\ell-1)(\lambda(-1)-1).
\]
The remaining two Jacobi sums each have absolute value $\sqrt{q}$, giving an error term $E_1$ with
\[
|E_1| \le 2(\ell-1)\sqrt{q}.
\]

\medskip

\noindent
\textbf{The blocks with $1 \le i,j \le \ell-1$ and $i+j \equiv 0 \pmod{\ell}$.}
There are $\ell-1$ such pairs. In this case,
\[
J(\eta^i,\eta^{-i}) = -1, \quad
J(\lambda\eta^i,\lambda\eta^{-i}) = -\lambda(-1),
\]
while the remaining two Jacobi sums are bounded by $\sqrt{q}$. Thus the total contribution is
\[
-2(\ell-1) + E_2,
\quad
|E_2| \le 2(\ell-1)\sqrt{q}.
\]

\medskip

\noindent
\textbf{The remaining blocks.}
For the $(\ell-1)(\ell-2)$ pairs with $1 \le i,j \le \ell-1$ and $i+j \not\equiv 0 \pmod{\ell}$, none of the products of characters is trivial, so each Jacobi sum has absolute value $\sqrt{q}$. Hence the total contribution is bounded by
\[
4(\ell-1)(\ell-2)\sqrt{q}.
\]

\medskip

Collecting all contributions, we obtain
\[
\sum_{i,j} S(i,j)
\ge q - (\ell-2)(1+\lambda(-1)) - 3 - 4(\ell-1)^2\sqrt{q}.
\]
Substituting into  the expansion for $N_\ell(q)$ yields the following result.

\begin{thm}\label{thm:main}
Let $\ell$ be an odd prime dividing $q-1$. Then
\begin{equation}\label{mainbound}
N_\ell(q) \ge \frac{q - 4(\ell-1)^2 \sqrt{q} - (\ell-2)(1+\lambda(-1)) - 3}{4\ell^2}.
\end{equation}
\end{thm}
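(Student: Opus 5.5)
The bound is essentially a bookkeeping consequence of the block-by-block analysis just carried out, so the plan is to make that bookkeeping airtight. We start from
\[
N_\ell(q)=\frac{1}{4\ell^2}\sum_{i,j=0}^{\ell-1}S(i,j),
\]
which is valid because $f_\ell$ is exactly the indicator of NS$\ell$ elements. Here we use that $\frac1\ell\sum_i\eta^i(\alpha)$ is $1$ on nonzero $\ell$th powers and $0$ otherwise, and that $\frac12(1-\lambda(\alpha))$ is $1$ on non-squares and $0$ on squares, with both factors vanishing at $0$. Since $N_\ell(q)$ is real, it suffices to bound $\operatorname{Re}\sum S(i,j)$ from below. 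To do this, we split the $\ell^2$ index pairs into the five classes already listed, separate the exactly evaluable Jacobi sums (those with a trivial character or with product of characters trivial) from the genuinely oscillating ones, and bound each of the latter by $\sqrt q$ in absolute value.

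The key step is to verify that every Jacobi sum is correctly classified. For $1\le i\le\ell-1$, the characters $\eta^i$, $\lambda\eta^i$ and $\lambda$ are all nontrivial, because $\ell$ is odd so the orders $2$ and $\ell$ cannot cancel. Among the products, $\eta^i\eta^j$ is trivial iff $i+j\equiv0\pmod\ell$; $\lambda\eta^i\cdot\eta^j$ is never trivial; and $\lambda\eta^i\cdot\lambda\eta^j=\eta^{i+j}$ is trivial iff $i+j\equiv0$. In that last case we use $J(\chi,\bar\chi)=-\chi(-1)$, which gives $J(\lambda\eta^i,\lambda\eta^{-i})=-\lambda(-1)$ because $\eta(-1)=1$ for odd $\ell$. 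The sums with one trivial argument equal $-1$. Sums like $J(1,1)$ do not occur outside $S(0,0)$, and $S(0,0)$ is computed directly as $q-2+\lambda(-1)$ from the standard evaluations $\sum\lambda(\alpha)\lambda(\alpha+1)=-1$ and $\sum_{\alpha\ne0,-1}\lambda(\alpha)=-\lambda(-1)$.

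Then we add up. The main terms are
\[
(q-2+\lambda(-1))+(\ell-1)(\lambda(-1)-1)-2(\ell-1)+(\text{constants from }S(0,j)).
\]
In $S(0,j)$ the constants are $-1+\lambda(-1)$, and they should be kept rather than absorbed into the $2\sqrt q$ bound. The error terms count $2(\ell-1)+2(\ell-1)+2(\ell-1)+4(\ell-1)(\ell-2)=4(\ell-1)^2$ oscillating Jacobi sums, each at most $\sqrt q$. This is the crucial count: it gives exactly $4(\ell-1)^2\sqrt q$.

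The main obstacle is reconciling the constant term. Collecting all the constants should yield
\[
q-3-(\ell-2)(1+\lambda(-1))
\]
or something at least as large. If a direct sum produces a slightly different constant, for instance from the sign convention in the change of variables $\beta=-\alpha$ that introduces the factors $\lambda(-1)$ and $\eta^i(-1)=1$, I would recheck each block's constant and use $\lambda(-1)\in\{\pm1\}$ to show that the true constant dominates the stated one in both cases $q\equiv1$ and $q\equiv3\pmod4$. Dividing by $4\ell^2$ then gives \eqref{mainbound}.
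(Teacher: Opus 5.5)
Your plan follows the paper's argument: expand $f_\ell(\alpha)f_\ell(\alpha+1)$, evaluate the degenerate Jacobi sums, and bound the rest by $\sqrt q$. But the bookkeeping step you call crucial is false. The paper's ``collecting all contributions'' line makes the same unchecked jump, so following it does not close the gap.

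First, $2(\ell-1)+2(\ell-1)+2(\ell-1)+4(\ell-1)(\ell-2)=(\ell-1)(4\ell-2)=4(\ell-1)^2+2(\ell-1)$, not $4(\ell-1)^2$; for $\ell=3$ it is $20$, not $16$.

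Second, the degenerate part of $S(0,j)$ is $J(1,\eta^j)-J(1,\lambda\eta^j)=-1-(-1)=0$. The constant $\lambda(-1)-1$ belongs to $S(i,0)$, which you already counted. The correct total constant is
\[
(q-2+\lambda(-1))+(\ell-1)(\lambda(-1)-1)-2(\ell-1)=q+1-3\ell+\ell\lambda(-1).
\]
This agrees with the stated $q-3-(\ell-2)(1+\lambda(-1))$ when $\lambda(-1)=1$. When $\lambda(-1)=-1$ it equals $q+1-4\ell<q-3$. This matches the main term $q-11$ in the paper's formulas $36(3,3)=q-11-2A$ and $q-11-8A$ for $q\equiv 7\pmod{12}$.

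So your fallback of showing that the true constant dominates the stated one fails for $q\equiv 3\pmod 4$. Applying $|J|\le\sqrt q$ term by term only yields $4\ell^2N_\ell(q)\ge q+1-3\ell+\ell\lambda(-1)-(\ell-1)(4\ell-2)\sqrt q$. This is strictly weaker than the theorem: it falls short by $2(\ell-1)(\sqrt q+1-\lambda(-1))$.

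The missing idea is that the $6(\ell-1)$ non-degenerate sums in the families $S(0,i)$, $S(i,0)$ and $S(i,\ell-i)$ are not independent. Put $a_i=J(\lambda,\eta^i)$, $b_i=J(\lambda,\lambda\eta^i)$ and $B=\sum_{i=1}^{\ell-1}b_i$. Symmetry of $J$ and the identity $J(\chi_1,\chi_2)=\chi_1(-1)J(\chi_1,\overline{\chi_1\chi_2})$ give $J(\eta^i,\lambda\eta^{-i})=a_i$, $J(\lambda\eta^i,\eta^{-i})=\lambda(-1)b_i$ and $a_i=\lambda(-1)b_{\ell-i}$. Hence $\sum_i a_i=\lambda(-1)B$, and
\[
S(0,i)=\lambda(-1)(b_i-a_i),\qquad S(i,0)=\lambda(-1)-1-a_i+\lambda(-1)b_i,\qquad S(i,\ell-i)=-2-a_i-b_i .
\]
Summed over $1\le i\le \ell-1$, these three families contribute exactly $(\ell-1)(\lambda(-1)-3)-2B$, with $|2B|\le 2(\ell-1)\sqrt q$. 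Adding $S(0,0)$ and the bound $4(\ell-1)(\ell-2)\sqrt q$ from the generic blocks gives
\[
4\ell^2N_\ell(q)\ge q+1-3\ell+\ell\lambda(-1)-(\ell-1)(4\ell-6)\sqrt q .
\]
The right-hand side exceeds the numerator of the stated bound by $2(\ell-1)(\sqrt q+\lambda(-1)-1)\ge 2(\ell-1)(\sqrt q-2)>0$, since $q\ge 2\ell+1\ge 7$. With this cancellation inserted, your bookkeeping proves the theorem; without it, it cannot.
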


A weaker  (but simpler) criterion, obtained from Theorem \ref{thm:main} by ignoring  the constant term (which is negative) and replacing $(\ell-1)^2$ by the greater  $\ell^2$ is the following corollary.

\begin{cor}\label{Npos}
Suppose $\ell$ is an odd prime divisor of $q-1$ such that $q > 16 \ell^4$. Then $\F$ contains an  NS$\ell$ pair.
\end{cor}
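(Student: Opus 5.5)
The plan is to deduce the corollary directly from Theorem~\ref{thm:main} by showing that the numerator of the right-hand side of \eqref{mainbound} is positive whenever $q>16\ell^4$. Since $N_\ell(q)$ is a non-negative integer, a strictly positive lower bound gives $N_\ell(q)\ge 1$, and hence an NS$\ell$ pair exists.

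\textbf{Reduction to the square-root term.} We have $\lambda(-1)\in\{\pm1\}$ and $\ell\ge 3$, so $(\ell-2)(1+\lambda(-1))\le 2(\ell-2)$. The numerator is therefore at least
\[
q-4(\ell-1)^2\sqrt q-2\ell+1 .
\]
We also replace $(\ell-1)^2$ by $\ell^2$. This costs $4(2\ell-1)\sqrt q$, and we need that much to absorb the constant term. So we write the numerator as
\[
\bigl(q-4\ell^2\sqrt q\bigr)+\bigl(4(2\ell-1)\sqrt q-2\ell+1\bigr),
\]
using $4\ell^2-4(\ell-1)^2=4(2\ell-1)$.

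\textbf{Positivity.} The hypothesis $q>16\ell^4$ gives $\sqrt q>4\ell^2$, so the first bracket satisfies $\sqrt q(\sqrt q-4\ell^2)>0$. The second bracket equals $(2\ell-1)(4\sqrt q-1)-\ell+\dots$; more simply,
\[
4(2\ell-1)\sqrt q\ge 4(2\ell-1)\cdot 4\ell^2>2\ell-1,
\]
so the second bracket is positive as well. Hence the numerator is positive, $N_\ell(q)>0$, and the conclusion follows.

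There is no real obstacle here. The only point needing care is checking that the discarded constant $-(\ell-2)(1+\lambda(-1))-3$ is dominated by the slack gained in the replacement $(\ell-1)^2\to\ell^2$, and the estimate above handles this for every odd prime $\ell$.
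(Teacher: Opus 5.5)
Your proof is correct and follows the paper's route: the paper gets the corollary from Theorem~\ref{thm:main} by replacing $(\ell-1)^2$ with $\ell^2$ and ``ignoring'' the negative constant. You make explicit the step that phrase glosses over, namely that the slack $4(2\ell-1)\sqrt q$ absorbs the constant $-(\ell-2)(1+\lambda(-1))-3\ge-(2\ell-1)$. One small fix: the second bracket equals exactly $(2\ell-1)(4\sqrt q-1)$, so delete the stray ``$-\ell+\dots$''.
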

  For illustration purposes, we append  Table \ref{smalltable} to demonstrate the application of Theorem \ref{thm:main} and Corollary \ref{Npos} for  the first few odd prime values of  $\ell$ (although in the sequel we only  require the first couple of rows).  
  
   In the table,  for each prime $\ell$ dividing $q-1$ listed, the first column  gives the minimum value of $q$ for which $N_\ell(q)$ is bound to be positive (as guaranteed by Theorem \ref{thm:main}). The second column gives the minimum value of $q$ which guarantees $N_\ell(q)$ by the weaker constraint of Corollary \ref{Npos}.    The benefit of the availability of Theorem \ref{thm:main} is particularly significant  when $\ell= 3$ or $5$.
   
   In column 3, it is assumed that $ \theta_q \le 1/3$ and $\ell$ is actually the least prime dividing $q-1$.  From these values, it is clear that, whenever $\ell \ge 7$, these conditions easily guarantee that $q \ge \ell^4$ and so that $N_\ell(q)$ is positive.  All of this suggests that the next step should be a discussion of the situation when the least prime $\ell$ dividing $q-1$ is at least $7$.  This is accomplished in the next section.
  
\begin{table}[h]\label{smalltable}
\centering 
\caption{Bounds for small $\ell$}
\begin{tabular}{c|c|c|c}   
$\ell$ 
& min $q$ (by Th \ref{thm:main})
& min $q$ (by Cor \ref{Npos}) 
& min $q$ with $\theta_q \le 1/3$  \\[2pt]
 \hline
3  & 269          & 1,296        & 31 \\
5  & 4,117        & 10,000       & 771 \\
7  & 20,769       & 38,416       & 646,647 \\
11 & 160,045      & 234,256      & 70,673,696,523\\
13 & 331,829      & 456,976       & 28,215,721,625,272,767 \\
17 & 1,048,645    & 1,336,336     & 37,158,896,445,334,596,135,027\\
\end{tabular}
\end{table}

\bigskip

\section{The existence of NS$\ell$ pairs for $\ell \ge 7$}\label{bigL}

We begin by separating the case in which $\ell=7$.
\begin{thm}\label{l7}
Let $q$ be an odd prime power such the least odd prime divisor of $q-1$ is $7$ and that $\theta_q \le 1/3$. Then $\F$  contains an  $NS7$ pair.
\end{thm}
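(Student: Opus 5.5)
The plan is to reduce the statement to Theorem~\ref{thm:main} (equivalently Corollary~\ref{Npos}) with $\ell=7$. It suffices to show that the hypotheses force $q$ to exceed the threshold in Table~\ref{smalltable}. Corollary~\ref{Npos} requires $q>16\cdot 7^4=38{,}416$. Theorem~\ref{thm:main} requires only
\[
q-144\sqrt q-10-3>0,
\]
which already holds for $q\ge 20{,}769$.

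The key step is a lower bound for $q$ from $\theta_q\le 1/3$. Since $7$ is the least odd prime divisor of $q-1$, write $q-1=2^a\prod_{i=1}^{r} p_i^{b_i}$ with $a\ge 1$ and distinct odd primes $7\le p_1<\dots<p_r$, where $p_1=7$. Then
\[
\theta_q=\frac12\prod_{i=1}^{r}\Bigl(1-\frac1{p_i}\Bigr).
\]
The condition $\theta_q\le 1/3$ becomes $\prod_i (1-1/p_i)\le 2/3$. Each factor is at least $1-1/p$ for the successive primes $7,11,13,17,\dots$, which are the smallest available since $3,5\nmid q-1$. So $r$ must be at least the least $k$ for which the product over the first $k$ primes $\ge 7$ is $\le 2/3$.

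I would compute this product explicitly. It gives $6/7\approx .857$, then $\cdot 10/11\approx .779$, $\cdot 12/13\approx .719$, $\cdot 16/17\approx .677$, $\cdot 18/19\approx .641$. Hence $r\ge 5$, and
\[
q-1\ge 2\cdot 7\cdot 11\cdot 13\cdot 17\cdot 19=646{,}646.
\]
This matches the entry $646{,}647$ in the third column of Table~\ref{smalltable}. More generally, if the primes are not the smallest ones, the product of $1-1/p_i$ is larger, so more primes are needed and $q$ is even bigger. Formally: replacing the $i$th prime $p_i$ by the $i$th prime $\ge 7$ decreases both the product of $(1-1/p_i)$ and $q-1$, so the minimal configuration gives the lower bound.

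Finally, $q\ge 646{,}647>38{,}416$, so Corollary~\ref{Npos} gives $N_7(q)>0$, that is, $\F$ contains an NS7 pair.

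The only delicate point is the monotonicity argument: any admissible set of primes has at least five elements and hence product at least the product of the five smallest. This is straightforward, since $q-1\ge 2\prod p_i\ge 2\cdot 7\cdot 11\cdot 13\cdot 17\cdot 19$ whenever $r\ge 5$. So I expect no real obstacle; the main care is in checking the numerics of the product.
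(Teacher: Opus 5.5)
Your proposal is correct and follows essentially the same route as the paper. You show that at most four odd prime factors would force $\theta_q>1/3$, since $\frac{6}{7}\cdot\frac{10}{11}\cdot\frac{12}{13}\cdot\frac{16}{17}>\frac{2}{3}$. You then deduce $q\ge 2\cdot 7\cdot 11\cdot 13\cdot 17\cdot 19+1=646{,}647>16\cdot 7^4$ and conclude via Corollary~\ref{Npos}, with your explicit comparison of the $i$th odd prime factor of $q-1$ against the $i$th prime $\ge 7$ spelling out a step the paper leaves implicit.
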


\begin{proof}
As hypothesised, suppose  that $q$ is such that $\ell=7 \mid (q-1)$ but $3 \nmid (q-1)$ and $5 \nmid (q-1)$, and also that $\theta_q \le 1/3$.   Let $\omega_q$ denote the number of distinct prime divisors of $q-1$.
If $\omega_q \le 5$, then
\[
\theta_q \ge \frac{1}{2} \cdot \frac{6}{7} \cdot \frac{10}{11} \cdot \frac{12}{13} \cdot \frac{16}{17} > 0.338.
\]
Hence $\omega_q \ge 6$ and
\[
\theta_q \ge \frac{1}{2} \cdot \frac{6}{7} \cdot \frac{10}{11} \cdot \frac{12}{13} \cdot \frac{16}{17} \cdot \frac{18}{19} > 0.3206.
\]
It follows that
\[
q \ge 2 \cdot 7 \cdot 11 \cdot 13 \cdot 17 \cdot 19 + 1 = 646,647> 16\ell^4,
\]
and the result holds by Corollary  \ref{Npos} (as in the third row of Table \ref{smalltable}).
\end{proof}

We have to devise an argument  that suffices for a general prime $\ell \geq 11$  even though a similar conclusion to that of Theorem \ref{l7} is ``obvious'' for any specific prime.

To achieve this, let the odd primes in increasing order be denoted $3=\ell_1, \ell_2, \dots$, so that $\ell = \ell_r$ for some $r \ge 3$. For $r \ge 1$, define the sum $S_r$, the product $P_r$, and the expression $\Phi_r$ by
\[
S_r = \sum_{i=r}^{2r} \frac{1}{\ell_i}, \qquad P_r = \prod_{i=r}^{2r} \ell_i, \qquad \Phi_r = \prod_{i=r}^{2r} \Big(1 - \frac{1}{\ell_i}\Big).
\]

\begin{lemma}\label{bertrand}
The sequence $S_r$ is decreasing.
\end{lemma}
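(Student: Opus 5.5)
The plan is to compare consecutive terms directly. Passing from $S_r$ to $S_{r+1}$ removes the term $1/\ell_r$ and adds the two terms $1/\ell_{2r+1}$ and $1/\ell_{2r+2}$. Hence
\[
S_r - S_{r+1} = \frac{1}{\ell_r} - \frac{1}{\ell_{2r+1}} - \frac{1}{\ell_{2r+2}},
\]
and it suffices to show this is positive for every $r \ge 1$. Since $\ell_{2r+2} > \ell_{2r+1}$, a sufficient condition is $\ell_{2r+1} > 2\ell_r$. Indeed, both subtracted terms are then less than $1/(2\ell_r)$, so their sum is strictly less than $1/\ell_r$. The statement is thus reduced to a Bertrand-type counting assertion, which explains the label of the lemma.

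To prove $\ell_{2r+1} > 2\ell_r$, I would translate into the usual indexing of all primes: $\ell_r = p_{r+1}$, where $p_1 = 2$. Since $2\ell_r$ is even and not a prime, the condition $\ell_{2r+1} > 2\ell_r$ says that at most $2r$ odd primes lie below $2\ell_r$. Equivalently,
\[
\pi(2p_n) \le 2n - 1 \qquad (n = r+1 \ge 2).
\]
This says the interval $(p_n, 2p_n]$ contains at most $n-1$ primes.

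For large $n$ I would invoke the classical inequality $\pi(2x) < 2\pi(x)$, valid for all $x \ge 11$ (Rosser--Schoenfeld; it follows from their explicit bounds for $\pi(x)$). Taking $x = p_n$ with $p_n \ge 11$ gives $\pi(2p_n) < 2n$, that is, $\pi(2p_n) \le 2n-1$. The remaining small cases $p_n \in \{3,5,7\}$ are checked by hand:
\[
\pi(6) = 3 \le 3, \qquad \pi(10) = 4 \le 5, \qquad \pi(14) = 6 \le 7.
\]

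The only genuine obstacle is the sub-doubling inequality $\pi(2x) < 2\pi(x)$. I would cite it from the literature rather than reprove it. If a self-contained argument is preferred, I would instead use Dusart-type explicit bounds $x/\log x < \pi(x) < 1.26\,x/\log x$ beyond some explicit point, and verify the finitely many smaller $x$ by direct computation.
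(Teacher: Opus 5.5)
Your proof is correct, and it differs from the paper's at the one step where the paper's argument fails. Both start from $S_r-S_{r+1}=1/\ell_r-1/\ell_{2r+1}-1/\ell_{2r+2}$. The paper then bounds this below by $1/\ell_r-2/\ell_{r+1}$ and claims positivity ``since $\ell_{r+1}\le 2\ell_r$ by Bertrand's Postulate''. But positivity of $1/\ell_r-2/\ell_{r+1}$ would need $\ell_{r+1}>2\ell_r$, the reverse of Bertrand; for $r=1$ it equals $1/3-2/5<0$. So that chain proves nothing.

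You identify the inequality that is actually needed: $\ell_{2r+1}>2\ell_r$, i.e.\ $p_{2n}>2p_n$, i.e.\ $\pi(2p_n)<2\pi(p_n)$. This is an \emph{upper} bound on the number of primes in $(p_n,2p_n]$, which Bertrand's postulate (a lower bound) cannot supply. Since $p_{2n}/p_n\to 2$, no Chebyshev-strength estimate can supply it either. Citing Rosser--Schoenfeld's $\pi(2x)<2\pi(x)$ for $x\ge 11$, together with your three hand checks, gives a complete proof and in effect repairs the paper's argument.

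One caution about your optional self-contained route. Bounds of the shape $x/\log x<\pi(x)<1.26\,x/\log x$ give $\pi(2x)<2\pi(x)$ only when $1.26\log x<\log x+\log 2$, i.e.\ for $x$ below about $14$, not ``beyond some explicit point''. In general, any bounds $a\,x/\log x<\pi(x)<b\,x/\log x$ with $b>a$ fail for large $x$, because $\log 2x/\log x\to 1$.

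You need second-order bounds, for example Dusart's $\frac{x}{\log x}\bigl(1+\frac{1}{\log x}\bigr)\le\pi(x)\le\frac{x}{\log x}\bigl(1+\frac{1.2762}{\log x}\bigr)$, with the lower one valid for $x\ge 599$. Writing $L=\log x$, the comparison becomes $(1+\log 2-1.2762)L^2+\bigl((\log 2)^2+2\log 2\bigr)L+(\log 2)^2>0$. This holds for all $L>0$, leaving only $x<599$ to check by computation.
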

\begin{proof}
\[
S_r -S_{r+1} =\frac{1}{\ell_r}-\frac{1}{\ell_{2r+1}}-\frac{1}{\ell_{2r+2}}>\frac{1}{\ell_r}-\frac{2}{\ell_{2r+1} }>\frac{1}{\ell_r}-\frac{2}{\ell_{r+1} }>0,
\]
since $\ell_{r+1} \le 2\ell_r$ by (the proof of) Bertrand's Postulate.
\end{proof}
\begin{lemma} \label{lbounds}
For $r \ge 4$ and $\ell = \ell_r \ge11 $, the following inequalities hold:
\[
2 P_r > 16 \ell^4 \qquad \text{and} \qquad \frac{1}{2} \Phi_r > \frac{1}{3}.
\]
\end{lemma}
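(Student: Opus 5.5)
The plan is to treat the two inequalities separately: the first follows from a crude bound on $P_r$, and the second reduces to the single case $r=4$ via Lemma~\ref{bertrand}.

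\textbf{The bound on $P_r$.} The product $P_r$ contains the $r+1$ primes $\ell_r,\ell_{r+1},\dots,\ell_{2r}$, and each of them is at least $\ell_r=\ell$. Hence $P_r \ge \ell^{r+1} \ge \ell^5$ when $r\ge 4$. Therefore
\[
2P_r \ge 2\ell^5 = 2\ell\cdot \ell^4 \ge 22\,\ell^4 > 16\,\ell^4,
\]
using $\ell\ge 11$. No finer information about the primes is needed here.

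\textbf{The bound on $\Phi_r$.} The claim $\tfrac12\Phi_r>\tfrac13$ is equivalent to $\Phi_r>\tfrac23$. I will use the elementary product inequality
\[
\prod_i (1-x_i) \ge 1-\sum_i x_i, \qquad 0\le x_i\le 1,
\]
which follows by induction on the number of factors. Applied with $x_i=1/\ell_i$, it gives $\Phi_r \ge 1-S_r$.

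By Lemma~\ref{bertrand}, the sequence $S_r$ is decreasing, so $S_r\le S_4$ for all $r\ge 4$. With the indexing $\ell_1=3$, $\ell_2=5$, $\ell_3=7$, $\ell_4=11,\dots,\ell_8=23$, we get
\[
S_4=\frac1{11}+\frac1{13}+\frac1{17}+\frac1{19}+\frac1{23} < 0.0910+0.0770+0.0589+0.0527+0.0435 = 0.3231 < \frac13 .
\]
Hence $\Phi_r\ge 1-S_4>\tfrac23$ for every $r\ge4$, which is the required inequality.

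\textbf{Main obstacle.} There is no real obstacle. The only step needing care is the numerical check that $S_4<1/3$, and it holds with a comfortable margin. The monotonicity supplied by Lemma~\ref{bertrand} is exactly what reduces the infinitely many cases $r\ge 4$ to this one computation.
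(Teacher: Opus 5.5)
Your proof is correct and follows the paper's argument: $P_r \ge \ell^{r+1} \ge \ell^5$ gives the first inequality, and the product inequality $\Phi_r \ge 1-S_r$ combined with Lemma~\ref{bertrand} ($S_r \le S_4$) gives the second. Your numerics are more accurate than the paper's: you correctly take $\ell_8=23$, giving $\tfrac12(1-S_4)\approx 0.3386$, whereas the paper's ``$\ell_8=19$'' and ``$>0.36$'' come from omitting the term $1/23$; the conclusion $>\tfrac13$ holds either way.
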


\begin{proof}
Each factor in $P_r$ is at least $\ell$, so $P_r \ge \ell^{r+1} \ge \ell^5$ since $r \ge 4$. Hence $2 P_r \ge 2 \ell^5 > 16 \ell^4$.  

Next, using $\prod (1 - a_i) \ge 1 - \sum a_i$ for  $0\le a_i <1$, we have $\Phi_r \ge 1 - S_r \ge 1 - S_4$, by Lemma \ref{bertrand}. Since $\ell_8 = 19$, it follows that
\[
\frac{1}{2} \Phi_r \ge \frac{1}{2}(1 - S_4) > 0.36 > \frac{1}{3}.
\]
\end{proof}

\begin{thm}\label{largeL}
Let $q$ be an odd prime power such that $\theta_q \le 1/3$, and suppose $\ell \ge 11$ is the least odd prime divisor of $q-1$. Then $\F$ contains an  NS$\ell$ pair.
\end{thm}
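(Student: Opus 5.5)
The plan is to reduce Theorem~\ref{largeL} to Corollary~\ref{Npos}, following the model of Theorem~\ref{l7}. Write $\ell=\ell_r$ with $r\ge 4$; the hypothesis that $\ell$ is the least odd prime divisor of $q-1$ means that every odd prime divisor of $q-1$ is at least $\ell_r$. Let $\omega$ be the number of distinct odd prime divisors of $q-1$, and list them as $p_1<p_2<\dots<p_\omega$. Then $p_i\ge \ell_{r+i-1}$ for each $i$.

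The first step is a lower bound on $\omega$. Since $1-1/p$ increases with $p$,
\[
\theta_q=\frac12\prod_{i=1}^{\omega}\Bigl(1-\frac1{p_i}\Bigr)\ \ge\ \frac12\prod_{i=r}^{r+\omega-1}\Bigl(1-\frac1{\ell_i}\Bigr).
\]
Suppose $\omega\le r+1$. The product on the right then uses at most $r+1$ factors, all of them taken from those appearing in $\Phi_r$, and each factor is less than $1$. So the right-hand side is at least $\tfrac12\Phi_r$, which exceeds $\tfrac13$ by Lemma~\ref{lbounds}. This contradicts $\theta_q\le \tfrac13$, so $\omega\ge r+2>r+1$.

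The second step bounds $q$ from below. Since $q-1$ is even and has at least $r+1$ distinct odd prime factors, each at least the corresponding $\ell_{r+i-1}$,
\[
q-1\ \ge\ 2\prod_{i=1}^{r+1}p_i\ \ge\ 2\prod_{i=r}^{2r}\ell_i=2P_r .
\]
Lemma~\ref{lbounds} gives $2P_r>16\ell^4$, so $q>16\ell^4$. Corollary~\ref{Npos} then produces an NS$\ell$ pair.

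The only delicate point is the index bookkeeping: the block $\ell_r,\dots,\ell_{2r}$ has exactly $r+1$ primes, so ruling out $\omega\le r+1$ is precisely what is needed to guarantee $r+1$ odd prime factors for the size bound. Lemmas~\ref{bertrand} and~\ref{lbounds} already carry the real analytic content, namely monotonicity of $S_r$ via Bertrand's postulate and the numerical check $\tfrac12(1-S_4)>0.36$. So I expect no serious obstacle beyond making sure the comparison $p_i\ge\ell_{r+i-1}$ is applied in the right direction in both steps.
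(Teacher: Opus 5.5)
Your proposal is correct and is essentially the paper's own proof. The paper also compares the odd prime factors of $q-1$ termwise with $\ell_r,\ell_{r+1},\dots$, uses $\tfrac12\Phi_r>\tfrac13$ from Lemma~\ref{lbounds} to force more than $r+1$ of them (its ``$s>2r$''), then gets $q>2P_r>16\ell^4$ and applies Corollary~\ref{Npos}; your version just spells out the truncation step more explicitly.

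As a side remark, two things in your closing paragraph come from slips in the paper. The value $0.36$ is wrong because $\ell_8=23$, not $19$, so in fact $\tfrac12(1-S_4)\approx 0.3386$. Also, Bertrand's postulate alone does not give the monotonicity of $S_r$: the paper's last inequality there runs the wrong way, and one needs an estimate like $\ell_{2r+1}>2\ell_r$. Neither point affects your argument, since the conclusion $\tfrac12\Phi_r>\tfrac13$ of Lemma~\ref{lbounds} that you actually use remains true.
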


\begin{proof}
Set $\ell = \ell_r$ for some $r \ge 4$, and let $k_r, \dots, k_s$ denote the odd prime factors of $q-1$ in increasing order.    Since the $i$th smallest odd prime divisor of $q-1$ is not less than the $i$th  smallest odd prime not less than $\ell$, i.e.,  $k_i \ge \ell_i$, we have
\[
\frac{1}{2} \prod_{i=r}^s \Big(1 - \frac{1}{\ell_i}\Big) \le \frac{1}{2} \prod_{i=r}^s \Big(1 - \frac{1}{k_i}\Big) = \theta_q \le \frac{1}{3}.
\]
By Lemma~\ref{lbounds}, it follows that $s > 2r$, so $q > 2 P_r > 16 \ell^4$. Applying Corollary~\ref{Npos} guarantees the existence of an NS$\ell$ pair.
\end{proof}
   
\bigskip

\section{The number of NS$\ell$ pairs as a cyclotomic number}

In Section~\ref{lb}, we derived lower bounds for $\mathrm{NS}_\ell(q)$ based primarily on the fact that Jacobi sums have absolute value $\sqrt{q}$ whenever neither of the characters involved, nor their product, is trivial. This resolves the problem for prime power values of $q$ when $\ell$, the least odd prime divisor of $q-1$, satisfies $\ell \geq 7$. 

It remains to examine a finite set of prime powers $q$ in the cases $\ell = 3$ or $5$. In these instances, it is natural to investigate whether the Jacobi sum method can be refined further, particularly since explicit evaluations are known for Jacobi sums of orders $6$ and $10$. See, for example, Chapters~3 and~11 of~\cite{BEW1998}, and in particular Theorems~11.6 and~11.7.

However, the author was unable to adapt the Jacobi sum approach effectively to exploit these explicit expressions in order to compute $N_3(q)$ or $N_5(q)$, nor even to obtain stronger lower bounds than those given in~\eqref{mainbound} that would demonstrate the positivity of these quantities in general.

There is, however, an alternative, closely related and natural approach, namely to take advantage of existing tables of cyclotomic numbers of orders 6 and 10.  Classically, such tables go back to Gauss and Jacobi  themselves, through Dickson \cite{Dic1935} and to Whiteman \cite{Whi1960}, Table 2,  and \cite{Whi1960a} as well as many others. 

Given {\em any}  odd prime $\ell$ dividing $q-1$  and a primitive element of $\F$, the cyclotomic numbers of order $2\ell$ are defined as follows. For all pairs $(i,j), 0 \le i,j \le \ell-1$, the cyclotomic number $(i,j)_{2\ell}$ counts the number of $\alpha \in \F$  such that both $\alpha/\gamma^i$ and $(\alpha +1)/\gamma^j$ are non-zero $2\ell$th powers in $\F$.
      
  \begin{lemma}\label{cyclnum}
 Suppose $\ell$ is an odd prime divisor of $q-1$. Then $NS_\ell(q)= (\ell,\ell)_{2\ell}$.
   \end{lemma}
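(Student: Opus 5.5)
The plan is to identify the coset of $(\F^\times)^{2\ell}$ containing $\gamma^\ell$ with the set of NS$\ell$ elements, and then read the lemma off directly from the definition of the cyclotomic number $(\ell,\ell)_{2\ell}$. Here $\gamma$ is the fixed primitive element, and the index $\ell$ is understood modulo $2\ell$. (Note that with $0\le i,j\le \ell-1$ as written, the index range should really be $0\le i,j\le 2\ell-1$ for cyclotomic numbers of order $2\ell$; I will take indices modulo $2\ell$.)

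The first step is to write every $\alpha\in\F^\times$ as $\alpha=\gamma^k$ with $0\le k<q-1$. Since $q-1$ is even, $\alpha$ is a non-square if and only if $k$ is odd. Since $\ell\mid q-1$, $\alpha$ is an $\ell$th power if and only if $\ell\mid k$.

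Because $2\ell\mid q-1$ (both $2$ and $\ell$ divide $q-1$ and $\ell$ is odd), the residue of $k$ modulo $2\ell$ is well defined. By the Chinese Remainder Theorem, the conditions "$k$ odd" and "$\ell\mid k$" together say exactly that $k\equiv \ell \pmod{2\ell}$. This in turn is equivalent to $\alpha/\gamma^\ell$ being a nonzero $2\ell$th power. Indeed, $\alpha/\gamma^\ell=\gamma^{k-\ell}$, and an element $\gamma^m$ is a $2\ell$th power precisely when $2\ell\mid m$, since $2\ell\mid q-1$.

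It follows that $\alpha$ is NS$\ell$ if and only if $\alpha/\gamma^\ell\in(\F^\times)^{2\ell}$. Applying this to both $\alpha$ and $\alpha+1$, the count $N_\ell(q)$ of $\alpha$ with $\alpha,\alpha+1$ both NS$\ell$ equals the number of $\alpha$ with $\alpha/\gamma^\ell$ and $(\alpha+1)/\gamma^\ell$ both nonzero $2\ell$th powers, which is $(\ell,\ell)_{2\ell}$ by definition. Zero is excluded automatically on both sides.

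There is no real obstacle. The only points needing care are that $2\ell\mid q-1$, so that the exponent residues mod $2\ell$ make sense, and that the answer does not depend on the choice of $\gamma$. The second point holds because replacing $\gamma$ by another primitive element $\gamma^t$ with $t$ coprime to $q-1$ changes $\gamma^\ell$ by a factor $\gamma^{\ell(t-1)}$. Since $t$ is odd, $t-1$ is even, so this factor lies in $(\F^\times)^{2\ell}$ and the coset is unchanged.
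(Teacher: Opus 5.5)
Your proof is correct and takes essentially the same route as the paper. Both identify the NS$\ell$ elements as exactly the coset $\gamma^\ell(\F^\times)^{2\ell}$ by reading the index of $\alpha$ modulo $2\ell$: the paper writes $\alpha=\gamma^{\ell(1+2s)}$ and says ``conversely'', while you spell out both directions with the Chinese Remainder Theorem. Your remarks that the indices should be taken modulo $2\ell$, and that the coset does not depend on the choice of $\gamma$, are correct and fill in points the paper leaves implicit.
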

 \begin{proof}
 To be counted by $(\ell, \ell)_{2\ell}$, a pair $(\alpha,\alpha+1)$ must be such that $\alpha=\gamma^\ell\beta^{2\ell}$ for some $\beta \ne 0$.  Set $\beta =\gamma^s$. Then the index of $\alpha$ with respect to the generator $\gamma$ is $\ell(1+2s)$ which is odd (so $\alpha$ is not a square), yet divisible by $\ell$ (so $\alpha$ is an $\ell$th power).  The same is true for $\alpha +1$ and conversely. 
 \end{proof}   
 
 As we have indicated, tables for $(\ell,\ell)_{2\ell}$ for $\ell=3,5$  exist but can be elusive to locate. One problem is that although $ (\ell.\ell)_{2\ell}$ is an invariant of $\F$, the particular formula to apply depends on the choice of primitive element and the index of the element $2$ with respect to that generator. In principle, for a given $q$ it is less effort to  choose $\gamma$, find the index of $2$ and choose the appropriate formula from a table that search directly through all the non-squares and $\ell$th powers in $\F$ but, in practice, with the convenience of computation nowadays, it is probably as reasonable to adopt the latter approach.

  \section{The existence of NS3 pairs} 
, 

Let $q$ be an odd prime power with $\ell=3\mid(q-1)$, so $q \equiv 1 \pmod{6}$.  In this case
the restriction $\theta_q\le1/3$ is automatically fulfilled.  Let $N_3(q)$ denote the
number of NS3 pairs in $\F$. 

From Theorem \ref{thm:main} with $\ell=3$,  $N_3(q) >0$ whenever $q\ge 269$ (see Table \ref{smalltable}), so it suffices to consider smaller values of $q$.
There are 29 such prime power values, namely those in the following set:
\begin{align*}
Q=\{\, & 7, 13, 19, 25,  31, 37,43,49, 61, 67,73, 79, 97, 103,  109, 121, 127, 139, 151, 157, 163,\\ & 169,181,  193,199, 211, 223,  229, 241 \,\}\,
\end{align*} 
 
 As it happens we can  show the existence of an NS3 pair in $\F$ quite simply   when $q$ is a Mersenne prime $q=2^m-1$, where $m$ is a prime exceeding 3.
 In particular,  this applies to the members 31, 127 of $Q$. 
\begin{lemma} \label{mers}
Suppose $q>7$ is a Mersenne prime .  Then $(q-2,q-1)=(-2,-1)$ is an $NS3$ pair.
\end{lemma}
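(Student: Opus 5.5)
We must show that $-1$ and $-2$ are both non-squares and cubes in $\F$, where $q=2^m-1$ is prime with $m\ge 5$ prime. Since $q>7$ excludes $m=3$, and $m=2$ gives $q=3$, which has $3\nmid q-1$ (so NS3 is not meaningful there), we may take $m\ge 5$ and $m$ odd. The plan is to treat the two properties (non-square, cube) separately, using only elementary congruences for $q$ together with the structure of $\F^\times$ as a cyclic group of order $q-1$.

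\emph{Non-squares.} First, $q=2^m-1\equiv 3 \pmod 4$ because $m\ge 2$, so $-1$ is a non-square, i.e. $\lambda(-1)=-1$. Next, $q\equiv -1 \pmod 8$ because $m\ge 3$, so $2$ is a square by the second supplementary law. Hence $\lambda(-2)=\lambda(-1)\lambda(2)=-1$, and $-2$ is also a non-square.

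\emph{Cubes.} We need $3\mid q-1$. Since $m$ is odd, $2^m\equiv 2\pmod 3$, so $q-1=2^m-2\equiv 0 \pmod 3$. As $\F^\times$ is cyclic, an element is a cube exactly when its $(q-1)/3$-th power is $1$. For $-1$ this is immediate, since $-1=(-1)^3$. For $2$, the key observation is that $2^m=q+1\equiv 1$ in $\F$, so the multiplicative order of $2$ divides the odd prime $m$. Thus the order of $2$ is coprime to $3$, which places $2$ in the subgroup of cubes: the order of $2$ divides $(q-1)/3$ as long as $m\mid (q-1)/3$. Alternatively, and more directly, $2=2^{m+1}=(2^{(m+1)/3})^3$ when $3\mid m+1$. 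In general, pick $t$ with $3t\equiv 1 \pmod m$, which is possible since $\gcd(3,m)=1$ for $m\ge 5$. Then $2=2^{3t}=(2^t)^3$ because $2^m=1$. Hence $-2=(-1)^3(2^t)^3$ is a cube.

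The only point needing care is this cube argument for $2$, and the exponent trick $3t\equiv1\pmod m$ disposes of it cleanly. It needs only $m\ne 3$, which is exactly where the hypothesis $q>7$ enters (for $q=7$, $2$ has order $3$ and is not a cube). Combining the two parts, $-2$ and $-1=-2+1$ are consecutive non-square cubes, so $(q-2,q-1)$ is an NS3 pair.
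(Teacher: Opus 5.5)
Your proof is correct and follows essentially the paper's route: $-1$ is a non-square cube because $q\equiv 3 \pmod 4$, and $2$ is a square and a cube because $2^m=1$ in $\F$ with $m$ odd and prime to $3$. The differences are cosmetic. You get $\lambda(2)=1$ from the supplementary law via $q\equiv -1\pmod 8$, and you exhibit the cube root $2=(2^t)^3$ with $3t\equiv 1\pmod m$. The paper instead reads off both facts from the index of $2$, which is a multiple of $(q-1)/m=2(2^{m-1}-1)/m$ and hence divisible by $6$.
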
 
\begin{proof}
Write $q=2^m-1$, where $m >3$ is prime.  Evidently $q \equiv 3 (\pmod {4})$  and so $-1$ is a non-square and consequently an NS3  element of $\F$. 

Hence -1 is an NS3 element of $\F$. Moreover, in $\F$, $2^m=1$, i.e., 2 has order $m$ and so index
\[
 \frac{q-1}{m}= \frac{2(2^{m-1}-1)}{m}
\]
 in $\F$.   Hence 2 is a square and therefore -2 is a non-square.     Also 3 divides $2^{m-1}-1$ and therefore 3 divides $(2^{m-1}-1)/m$ since $m>3$.
 Thus 2  is a cube and so -2 is a cube.  Hence $(-2,-1)$ is an NS3 pair.
\end{proof}
\bigskip
For all the simplicity of computation, it is instructive to use formulae to derive the values of $N_3(q)$ for $q \in Q$.  The particular reference we shall use is Theorem 2 of \cite{AK1995} (with its accompanying tables)  because its validity  extends the to non-prime fields. Tables  1 and 2 (in \cite{AK1995}) list the various expressions for $36(3,3)$ that we need.  We use $t$ to denote the index of  $2$ with respect to the primitive element $\gamma$.   In particular,  Table 1 contains relevant expressions for the case in which $q \equiv 1 \pmod {12}$ and Table 2 for the case in which   $q \equiv 7 (\pmod{12}$.  In Table 1 we focus on the entries for the number $(0,3) =(3,3)$ whereas in Table 2 we look at the expressions for $(0,0)=(3,3)$.  The entries in the tables are expressed in terms of the integers $A,B$, defined by $q=A^2+3B^2$, where the sign of $A$ agrees with the value $A\equiv 1 (\pmod{3}$ and the sign of $B$ is such that $B \equiv -t  \pmod{3}$. 

In this way we obtain the precise value of $N_3(q)$ shown in Table \ref{N3qvalues} (of this paper).  To explain the headings of the columns, we note that 
\begin{itemize}
\item $\gamma$ relates to the generating primitive element of $\F$, which for primes $q$ is always the least positive primitive root;
\item $t = \text{ind}_\gamma(2)$ is the index of the element 2 with respect to $\gamma$;
\item $A,B$ are such that $q=A^2+3B^2$ with sign conventions as described;
\item the heading $36(3,3)$ relates to the relevant expression in the tables used in the evaluation;
\item the final column denotes the actual  value of $N_3(q)=(3,3)$ obtained.  
\end{itemize}
\begin{remark} 
\textnormal {When $t$ is a multiple of 3, the sign of $B$ is undetermined which does not matter because in these cases the value of $B$ does not appear in the expression}
 \end{remark}
 \begin{remark}
\textnormal {The fact that the expression used to evaluate  $36(3,3)$ must be an integer divisible by 36  is a valuable check on the validilty of the method.}
 \end{remark}
 \begin{remark}
\textnormal {Strictly speaking, Theorem 2 of \cite{AK1995} assumes that, not only is $q-1$ a multiple of 3 but so also is $p-1$, where $q$ is a power of the prime $p$.   This is not the case when $q=25$ or $121$. Nevertheless  , by lifting characters from the prime field to $\F$ by means of the  Davenport-Hasse theorem for Gauss and Jacobi sums  (see \cite{BEW1998}, Chapter 11), we justify  that the formula are valid in these cases too.  Certainly, the values obtained this way turn out to be correct.}
 \end{remark}
 
  Since for the purpose of Theorem \ref{thm:main} we seek to describe precisely when $N_3(q)$ is positive, we record the following conclusion to be drawn from Table \ref{N3qvalues}
\begin{thm}\label{N3conclus}

Let $q$ be an odd prime power with $3\mid(q-1)$.  Then $\mathbb F_q$ is
an $NS3$ pair if and only if $q$ is not a member of the set $\{7,13,19,25,37,43\}$.
\end{thm}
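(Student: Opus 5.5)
The plan is to split the range of $q$ at the threshold supplied by Theorem~\ref{thm:main}. For $\ell=3$ the bound~\eqref{mainbound} reads
\[
N_3(q)\ge \frac{q-16\sqrt q-(1+\lambda(-1))-3}{36},
\]
and this is positive as soon as $q-16\sqrt q>5$. That inequality holds for all $q\ge 269$, as recorded in Table~\ref{smalltable}. So every prime power $q\equiv 1 \pmod 6$ with $q\ge 269$ contains an NS3 pair. It remains to treat the finite set $Q$ of $29$ prime powers listed above.

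For the members $31$ and $127$ of $Q$, Lemma~\ref{mers} already exhibits the explicit NS3 pair $(-2,-1)$. For each remaining $q\in Q$ I would proceed as follows.
\begin{enumerate}
\item Fix a generator $\gamma$: the least primitive root when $q$ is prime, and a suitable generator of $\mathbb F_{25},\mathbb F_{49},\mathbb F_{121},\mathbb F_{169}$ otherwise.
\item Compute $t=\mathrm{ind}_\gamma(2)$.
\item Find $A,B$ with $q=A^2+3B^2$, normalised by $A\equiv1\pmod 3$ and $B\equiv -t\pmod 3$.
\item Read off the expression for $36(3,3)$ from Table~1 of \cite{AK1995} when $q\equiv1\pmod{12}$, or from Table~2 when $q\equiv 7\pmod{12}$.
\end{enumerate}
By Lemma~\ref{cyclnum}, $N_3(q)=(3,3)_6$, so this yields the exact count. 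Divisibility of the resulting integer by $36$ serves as a consistency check on each entry.

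For $q=25$ and $q=121$ the prime $p$ does not satisfy $3\mid p-1$. There I would justify the formulas via the Davenport--Hasse lifting of Gauss and Jacobi sums. Alternatively, I would simply verify those two cases by direct enumeration: list the non-squares that are cubes in $\mathbb F_q$ and test consecutive pairs.

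The tabulated values then give $N_3(q)=0$ exactly for $q\in\{7,13,19,25,37,43\}$ and $N_3(q)>0$ for the other members of $Q$. Combined with the large-$q$ range, this proves the ``if and only if''. For the ``only if'' direction, the six exceptional fields are small enough that I would also confirm $N_3(q)=0$ by hand. For example, when $q=7$ the NS3 elements are the non-squares among $\{1,6\}$, namely $\{6\}$ alone, so no consecutive pair exists.

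The main obstacle is the bookkeeping in the formula step: the correct choice of table entry and of the sign conventions for $A,B$ relative to $t$. This is especially delicate for the non-prime fields, where the hypotheses of \cite{AK1995} fail or the generator is less canonical. If the formulas gave any doubtful value, I would fall back on direct computation of the NS3 elements for that $q$, which is trivial at this size.
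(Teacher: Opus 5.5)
Your proposal is essentially the paper's own proof, and it is correct to the same extent the paper's argument is: the cutoff $q\ge 269$ from Theorem~\ref{thm:main} with $\ell=3$, then the $29$ fields in $Q$ via Lemma~\ref{cyclnum} and the formulas of \cite{AK1995} for $36(3,3)$, with Davenport--Hasse (or direct enumeration) for $q=25,121$ and Lemma~\ref{mers} for $31,127$. One caveat you inherit from the paper: for $q\equiv 3\pmod 4$ the block constants of Section~\ref{lb} actually sum to $q-3-4(\ell-1)=q-4\ell+1$, not $q-3$ (this is $q-11$ for $\ell=3$, the same $q-11$ that appears in the $q\equiv 7\pmod{12}$ formulas), so for such $q$ the Jacobi-sum bound only forces $N_3(q)>0$ once $q\ge 278$, and the prime $q=271$ has to be added to $Q$; the same formula (here $A=-14$ and $2$ is not a cube) gives $36N_3(271)=271-11+28=288$, i.e.\ $N_3(271)=8$, so the conclusion survives.
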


\bigskip
To confirm and extend the values of $N_3(q)$ through a moderate amount of computation, via Pari/GP, we  append a list of the actual NS3 pairs for each $q \in Q$.    See Table \ref{N3qpairs}.

\begin{table}[H]
\centering
\renewcommand{\arraystretch}{1.15}
\caption{Evaluating $N_3(q)$ using Tables 1 and 2 in \cite{AK1995}}
\label{N3qvalues}
\begin{tabular}{ccccccc}
\toprule
$q$ & $\gamma$ & $t$ & $A$ & $B$ & $36(3,3)$ & $N_3(q)$ \\ 
\midrule
7   & 3 & 2 & -2 & 1& $q-11-2A$ & 0 \\
13  & 2 & 1 & 1 & 2 & $q-5+4A-6B$ & 0 \\
19  & 2 & 1 & 4 & -1 & $q-11-2A$ & 0 \\
25  & $2+\sqrt{2}$ & 6 & -5 & 0 & $q-5+4A$ & 0  \\
31  & 3 & 24 & -2 & $\pm 3$ & $q-11-8A$ & 1 \\
37  & 2 & 1 & -5 & 2 & $q-5+4A+6B$ & 0 \\
43  & 3 & 7 & 4 & $\pm 3$ & $q-11-8A$ & 0 \\
49  & $1+\sqrt{3}$ & 40 & 7 & 0 & $q-5+4A-6B$ & 2 \\
61  & 2 & 1 & 7 & 2 & $q-5+4A-6B$ & 2 \\
67  & 2 & 1 & -8 & -1 & $q-11-2A$ & 2 \\
73  & 5 & 34 & -5 & 4 & $q-5+4A+6B$ & 2 \\
79  & 3 & 4 & -2 & 5 & $q-11-2A$ & 2 \\
97  & 5 & 34 & -7 & -4 & $q-5+4A-6B$ & 4 \\
103 & 5 & 44 & 10 & 1 & $q-11-2A$ & 2 \\
109 & 6 & 57 & 1 & $\pm 6$ & $q-5+4A$ & 3 \\
121 & $1+\sqrt{2}$ & 12 & -11 & 0 & $q-5+4A$ & 2 \\
127 & 3 & 72 & 10 & $\pm 3$ & $q-11-8A$ & 1 \\
139 & 2 & 1 & -8 & 5 & $q-11-2A$ & 3 \\
151 & 6 & 98 & -2 & 7 & $q-11-2A$ & 4 \\
157 & 5 & 141 & 7 & $\pm 6$ & $q-5+4A$ & 5 \\
163 & 2 & 1 & 4 & 7 & $q-11-2A$ & 4 \\
169 & $1+2\sqrt{2}$ & 70 & -11 & -4 & $q-5+4A-6B$ & 4 \\
181 & 2 & 1 & 13 & 2 & $q-5+4A-6B$ & 6 \\
193 & 5 & 34 & 1 & 8 & $q-5+4A-6B$ & 4 \\
199 & 2 & 1 & -14 & -1 & $q-11-2A$ & 6 \\
211 & 2 & 1 & -8 & -7 & $q-11-2A$ & 6 \\
223 & 3 & 180 & -14 & $\pm 3$ & $q-11-8A$ & 9 \\
229 & 6 & 21 & -11 & $\pm 6$ & $q-5+4A$ & 5 \\
241 & 7 & 190 & 7 & 8 & $q-5+4A-6B$ & 6 \\
\bottomrule
\end{tabular}
\end{table}

\bigskip

\begin{table}[H]
\centering
\renewcommand{\arraystretch}{1.15}
\caption{Consecutive non-square cubic pairs}
\label{N3qpairs}

\begin{tabular}{c c p{0.65\textwidth}}
\toprule
$q$ & $N_3(q)$ &  NS3 pairs \\
\midrule
7   & 0 & -- \\
13  & 0 & -- \\
19  & 0 & -- \\
25  & 0 & -- \\
31  & 1 & $(29,30)$ \\
37  & 0 & -- \\
43  & 0 & -- \\
49  & 2 & $(3+\sqrt3,4+\sqrt3),\ (3+6\sqrt3,4+6\sqrt3)$ \\
61  & 2 & $(23,24),\ (37,38)$ \\
67  & 2 & $(42,43),\ (52,53)$ \\
73  & 2 & $(21,22),\ (51,52)$ \\
79  & 2 & $(14,15),\ (57,58)$ \\
97  & 4 & $(19,20),\ (45,46),\ (51,52),\ (77,78)$ \\
103 & 2 & $(89,90),\ (94,95)$ \\
109 & 3 & $(32,33),\ (54,55),\ (76,77)$ \\
121 & 2 & $(5+2\sqrt2,6+2\sqrt2),\ (5+9\sqrt2,6+9\sqrt2)$ \\
127 & 1 & $(125,126)$ \\
139 & 4 & $(59,60),\ (74,75),\ (75,76),\ (94,95)$ \\
151 & 4 & $(26,27),\ (27,28),\ (131,132),\ (142,143)$ \\
157 & 5 & $(7,8),\ (28,29),\ (78,79),\ (128,129),\ (149,150)$ \\
163 & 4 & $(27,28),\ (30,31),\ (98,99),\ (141,142)$ \\
169 & 4 & $(6+\sqrt2,7+\sqrt2),\ (6+3\sqrt2,7+3\sqrt2),\ (6+10\sqrt2,7+10\sqrt2),\ (6+12\sqrt2,7+12\sqrt2)$ \\
181 & 6 & $(6,7),\ (7,8),\ (30,31),\ (150,151),\ (173,174),\ (174,175)$ \\
193 & 4 & $(87,88),\ (88,89),\ (104,105),\ (105,106)$ \\
199 & 6 & $(11,12),\ (59,60),\ (82,83),\ (135,136),\ (136,137),\ (137,138)$ \\
211 & 6 & $(27,28),\ (67,68),\ (88,89),\ (89,90),\ (97,98),\ (146,147)$ \\
223 & 9 & $(26,27),\ (103,104),\ (189,190),\ (190,191),\ (206,207),$\\ &&$(207,208)\, (208,209),\ (215,216),\ (221,222)$ \\
229 & 5 & $(21,22),\ (106,107),\ (114,115),\ (122,123),\ (207,208)$ \\
241 & 6 & $(43,44),\ (101,102),\ (102,103),\ (138,139),\ (139,140),\ (197,198)$ \\
\bottomrule
\end{tabular}
\end{table}

 \FloatBarrier
 \bigskip 
 \section{The existence of NS5 pairs}

Let $q$ be an odd prime power such that the least odd prime divisor of $q-1$ is $5$.  

We are assuming $3\nmid (q-1)$.  The first possibility is that $3|q$, i.e., $q$ is a power of $3$, in which case $q=3^{4s}$, say, because $5|(q-1)$.  The other possibility is that $3|(q+1)$ in which case $q \equiv 11 (\pmod{30}$.  From Section \ref{lb}, if $q>4117$ and $\theta_q \le 1/3$, then $\F$ contains an NS5 pair.   The only prime power $q=3^{4s}< 4117$ is $81$ and $\theta_{81}  =2/5>1/3$,  So, as far as existence only is concerned, it suffices to restrict our attention to smaller prime powers $q\equiv 11 \pmod{30}$. Consequently, after calculation it remains only to consider  the five  prime powers in the set $\{911,\ 1331=11^3,\ 2381,\ 3221,\ 3851\}$.

Attempts to improve the lower bound of Theorem \ref{thm:main}  or find exact values for $N_5(q)$ by means of explicit values for Jacobi sums or Whiteman's tables (\cite{Whi1960a}) for $(5,5)_{10}$ did not justify the effort. Hence we complete this section with a table of values for $N_5(q)$ obtained through Pari/GP. This computation also delivered a full list of the actual $NS5$ pairs.

  \begin{table}[h]
\centering
\begin{tabular}{c|c}
\hline
$q$ & $N_5(q)$ \\
\hline
911  & 15 \\
1331 & 12 \\
2381 & 25 \\
3221 & 35 \\
3851 & 38 \\
\hline
\end{tabular}
\end{table}

\bigskip
\section{Proof of Theorem \ref{main} and some further questions}
For all the exceptional prime powers $q$ listed in Theorem \ref{lpower} except $q=43$, an NS3 pair is the same as an NSNP pair and therefore these prime powers remain exceptional in Theorem \ref{main}.   When $q=43$  it suffices to verify the following lemma.  To see this, we observe that $(7,8)$ is an NSNP pair since $7$ is a non-square seventh power and $8$ is a non-square cube. Hence $q=43$ is not exceptional for Theorem \ref{main}.

\begin{lemma} \label{case43}
In $\mathbb{F}_{43}$, $7$ is a non-square $7$th power and $8$ is a non-square cube.  Hence $(7,8)$ is a NSNP pair.
\end{lemma}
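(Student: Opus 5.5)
The plan is a direct verification in $\mathbb{F}_{43}$, which is small enough that every claim can be checked by hand. I will use two facts. First, $\mathbb{F}_{43}^\times$ is cyclic of order $42 = 2\cdot 3\cdot 7$. Second, an element $x$ is a $k$th power for $k\mid 42$ if and only if $x^{42/k}=1$; equivalently, it is a non-square if and only if $x^{21}=-1$. So I must show:
\begin{itemize}
\item $7$ is a non-square and satisfies $7^6=1$;
\item $8$ is a non-square and satisfies $8^{14}=1$ (or is visibly a cube).
\end{itemize}
Since $7$ and $8$ are consecutive, this proves the lemma.

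\textbf{The element $7$.} For the quadratic character I will use quadratic reciprocity. Since $7\equiv 43\equiv 3 \pmod 4$,
\[
\left(\tfrac{7}{43}\right) = -\left(\tfrac{43}{7}\right) = -\left(\tfrac{1}{7}\right) = -1,
\]
so $7$ is a non-square. For the $7$th-power property, I compute $7^2=49\equiv 6$ and $7^3\equiv 42\equiv -1$, hence $7^6=1$. Thus the order of $7$ divides $6$, and $7$ lies in the subgroup of $7$th powers.

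\textbf{The element $8$.} Since $8=2^3$, it is trivially a cube. Because $43\equiv 3\pmod 8$, we have $\left(\tfrac{2}{43}\right)=-1$, so
\[
\left(\tfrac{8}{43}\right)=\left(\tfrac{2}{43}\right)^3=-1,
\]
and $8$ is a non-square.

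\textbf{Conclusion.} Both elements are non-primitive: $7$ has order dividing $6$, and $8$, being a cube, has order dividing $14$; both orders are less than $42$. Together with the non-square property, $7$ and $8$ are each NSNP, so $(7,8)$ is an NSNP pair in $\mathbb{F}_{43}$. As explained before the lemma, this removes $q=43$ from the exceptional list of Theorem~\ref{lpower}, leaving exactly the set $\{7,13,19,25,37\}$ of Theorem~\ref{main}.

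There is no real obstacle here. The only care needed is in applying the reciprocity and supplementary laws with the correct signs, and these can be cross-checked by computing $7^{21}$ and $2^{21}$ modulo $43$ directly.
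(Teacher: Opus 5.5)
Your proof is correct and follows essentially the same approach as the paper. Both use quadratic reciprocity for $7$, the computation $7^3\equiv -1$ to place $7$ in the subgroup of $7$th powers of order $6=(q-1)/7$, and the observation that $8=2^3$ is a cube with $2$ a non-square. Incidentally, you state the order correctly, whereas the paper's ``$(q+1)/7$'' is a typo for $(q-1)/7$.
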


\begin{proof}
7 is a non-square by quadratic reciprocity,  Also $7^3\equiv 7\cdot 49  \equiv 7\cdot 6 \equiv -1 \pmod{43}$ so that 7 has order $6=(q+1)/7$. Hence $7$ is a $7$th power. 
The element $8$ is a cube, and $2$ is a non-square.
\end{proof}
\begin{remark}
\textnormal{In fact, $(7,8)$ is the sole NSNP pair in $\mathbb{F}_{43}$.}
\end{remark}

\bigskip
It remains to suggest a few questions for further investigation.

\begin{question}

\textnormal{In this paper we have throughout considered only odd prime powers in which situation it is natural to partition the set of non-primitive elements of  $\F$ into the set of non-zero squares and the set of non-primitive non-squares.  If $q$ is even, i.e., a power of $2$, then (more simply)  every non-zero element is either primitive or non-primitive. 
At one extreme, when $q-1$ is prime, all non-zero elements except $1$ are primitive.   When $q$ is even,  is there a condition on $\theta_q$  (comparable to the restriction $\theta_q \le 1/3$) that would guarantee that $\F$ contains a NP pair?}
\end{question}

\medskip
\begin{question}
\textnormal{Suppose $q$ is an odd prime power and $\ell$ is the least odd prime divisor of $q-1$.  It seems, from Section \ref{bigL}, that the conditions that $\theta_q \le 1/3$ might be more restrictive than necessary when $\ell \ge7$.  Is there a condition, for example, $\displaystyle{\theta_q<\frac{\ell-1}{2\ell}}$, which would guarantee the existence of a NS$\ell$ pair? }
\end{question}

\medskip

\begin{question}
\textnormal{Suppose $q$ is  a power of a prime $p\ge3$. From \cite{GLRST2007} we know (at least when $q$ itself  is prime)  that, for sufficiently large $q$,  there exists  
three consecutive NSNP elements in $\F$ that are each non-square and non-primitive.   Is there a condition of the form  $\theta_q<c$ that would guarantee this?
}

\end{question}

\bigskip
\subsection*{Acknowledgment}

The author was formerly Professor of Number Theory at the University of Glasgow.  He is grateful to the university for continuing use of library facilities.

\bigskip
\noindent
Stephen D. Cohen\\
6 Bracken Road, Portlethen, \\
Aberdeen, AB12 4TA, Scotland,UK\\
E-mail: stephen.cohen@glasgow.ac.uk
\end{document}